\theoremstyle{plain}
\newtheorem{X}{X}[section]
\newtheorem{Thm}[X]{Theorem}
\newtheorem{Prop}[X]{Proposition}
\numberwithin{equation}{section}
\renewcommand{\le}{\ensuremath{\leqslant}}
\renewcommand{\ge}{\ensuremath{\geqslant}}
\newcommand{\br}[1]{\ensuremath{\left(#1\right)}} 
\newcommand{\abs}[1]{\ensuremath{\left\lvert#1\right\rvert}} 
\newcommand{\sums}[2]{\ensuremath{\sum_{\substack{#1 \\ #2}}}}
\subjclass{Primary 11N25, Secondary 11A25}
\title{Products of shifted primes simultaneously taking perfect power values}
\author{Tristan Freiberg}
\address{D\'epartement de math\'ematiques et de statistique \\
Universit\'e de Montr\'eal \\
Montr\'eal, QC \\
Canada}
\email{freiberg@dms.umontreal.ca}
\begin{document}

\begin{abstract}
Let $r \ge 2$ be an integer and let $A$ be a finite, nonempty set of nonzero integers. We will obtain a lower bound for the number of squarefree integers $n$, up to $x$, for which the products $\prod_{p \mid n} (p+a)$ (over primes $p$) are perfect $r$th powers for all $a \in A$. Also, in the cases $A = \{-1\}$ and $A = \{+1\}$, we will obtain a lower bound for the number of such $n$ with exactly $r$ distinct prime factors.
\end{abstract}

\maketitle

\section{Introduction}\label{Section 1}

If we pick a large integer close to $x$ at random, the probability that it is a perfect $r$th  power is around $x^{1/r}/x$. We might expect the shifted primes $p+a$ to behave more or less like random integers in terms of their multiplicative properties. Thus, if we take a large squarefree integer $n$ close to $x$, we might naively expect that $\sigma(n) = \prod_{p \mid n} (p+1) \approx n$ is an $r$th power with probability close to $x^{1/r}/x$. However, as we will see, the probability is much higher than this, indeed more than $x^{0.7038}/x$, for \emph{any} given $r$. We will even show that the likelihood of $\phi(n)$ and $\sigma(n)$ \emph{simultaneously} being (different) $r$th powers is more than $x^{0.2499}/x$. (As usual, $\phi$ denotes Euler's totient function and $\sigma$ denotes the sum-of-divisors function.) It would seem that $r$th powers are ``popular'' values for products of shifted primes in general.

If we only count those $n$ with exactly $r$ prime factors, we will show that the number of such $n$ up to $x$ for which $\phi(n)$ is a perfect $r$th power is $\gg x^{1/r}/(\log x)^{r+2}$, and likewise for $\sigma(n)$. Thus there are $\gg x^{1/2}/(\log x)^4$ integers $n \le x$ for which $n = pq$, $p$ and $q$ distinct primes, and $(p-1)(q-1)$ is a square. This may be seen as an ``approximation'' to the well-known conjecture that there are infinitely many primes $p$ for which $p-1$ is a square. It is easily seen that there is at most one prime $p$ for which $p+1$ is a perfect $r$th power ($r \ge 2$), namely $3+1 = 2^2$, $7+1=2^3$, and so on.

Given an integer $r \ge 2$ and a finite, nonempty set $A$ of nonzero integers, let
\[
 \mathcal{B}(x;A,r) = \left\{n \le x : \textrm{$n$ is squarefree and $\textstyle{\prod_{p \mid n}(p+a)}$ is an $r$th power for all $a \in A$}\right\}.
\]
Banks et.\ al.\ \cite{BFPS2004} proved, among several other results, that $\abs{\mathcal{B}(x;\{-1\},2)}, \abs{\mathcal{B}(x;\{+1\},2)} \ge x^{0.7039 - o(1)}$, and that $\abs{\mathcal{B}(x;\{-1,+1\},2)} \ge x^{1/4 - o(1)}$, where here and throughout, $o(1)$ denotes a function tending to $0$ as $x$ tends to infinity. The first theorem generalizes both of these results.

\begin{Thm}\label{Theorem 1.1}
 Fix an integer $r \ge 2$, and a finite, nonempty set $A$ of nonzero integers. As $x \to \infty$, we have
\begin{align}\label{(1.1)}
 \abs{\mathcal{B}(x;A,r)} \ge x^{1/2\abs{A} - o(1)}.
\end{align}
Moreover, if $\abs{A} = 1$, then as $x \to \infty$, we have
\begin{align}\label{(1.2)}
 \abs{\mathcal{B}(x;A,r)} \ge x^{0.7039 - o(1)}.
\end{align}
\end{Thm}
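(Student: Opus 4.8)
The plan is to convert the simultaneous $r$th-power conditions into a single linear-algebra problem over $\mathbb{Z}/r\mathbb{Z}$ and then manufacture squarefree solutions by counting. Fix a smoothness level $y$ and a prime bound $z$, and let $\mathcal{P} = \mathcal{P}(z,y)$ be the set of primes $p \le z$ for which $p+a$ is $y$-smooth (all prime factors $\le y$) for \emph{every} $a \in A$. To each $p \in \mathcal{P}$ I attach the exponent vector $\mathbf{e}(p) = \br{v_q(p+a) \bmod r}_{q \le y,\, a \in A} \in \br{\mathbb{Z}/r\mathbb{Z}}^{d}$, where $d = \abs{A}\pi(y)$ and $v_q$ is the $q$-adic valuation. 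For a set $S \subseteq \mathcal{P}$, the product $\prod_{p \in S}(p+a)$ is a perfect $r$th power for all $a \in A$ exactly when $\sum_{p \in S}\mathbf{e}(p) = \mathbf{0}$, and since the primes in $S$ are distinct, $n = \prod_{p \in S}p$ is automatically squarefree. Thus the entire problem reduces to producing many zero-sum subsets $S$ subject to the size constraint $n = \prod_{p \in S} p \le x$.

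The combinatorial heart is the extraction of enough such zero-sum subsets. When $r = 2$ the vectors live in an $\mathbb{F}_2$-space, where a clean linear-algebra (or Fourier) count guarantees at least $2^{\abs{\mathcal{P}}}/2^{d}$ zero-sum subsets; this positivity is what ultimately yields the factor $1/2$, and it is essentially the input of \cite{BFPS2004}. For general $r$ this positivity over $\mathbb{Z}/r\mathbb{Z}$ is lost, so I would fall back on a device that produces zero-sum subsets unconditionally: grouping primes by their \emph{$r$-core}. Writing $p+a = \kappa_a(p)\,m_a^r$ with $\kappa_a(p)$ $r$th-power-free and $\boldsymbol{\kappa}(p) = \br{\kappa_a(p)}_{a \in A}$, any $r$ distinct primes sharing a common core, $\boldsymbol{\kappa}(p_1) = \cdots = \boldsymbol{\kappa}(p_r)$, satisfy $\prod_i (p_i+a) = \kappa_a^{\,r}\br{\cdots}^r$, an $r$th power, so $n = p_1\cdots p_r \in \mathcal{B}(x;A,r)$ once $n \le x$. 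Counting by core, the number of such $n$ is at least $\sum_{\boldsymbol{\kappa}}\binom{N_{\boldsymbol{\kappa}}}{r} \gg N^r/\br{r!\,K^{r-1}}$, where $N = \abs{\mathcal{P}}$, $K$ is the number of distinct cores, and $N_{\boldsymbol{\kappa}}$ the number of primes with core $\boldsymbol{\kappa}$; the point of using equal (rather than complementary) cores is that convexity makes this bound genuinely guaranteed, with no adversarial cancellation.

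It then remains to bound $N = \abs{\mathcal{P}(z,y)}$ from below, use the trivial $K \le N$, and optimize $y$, $z$, and the number of prime factors against the constraint $n \le x$. A lower bound of the shape $N \ge z^{1/\abs{A} - o(1)}$ for a suitable small power $y = z^{o(1)}$, together with the correct choice of how many primes to multiply together, is what the balancing is designed to turn into $\abs{\mathcal{B}(x;A,r)} \ge x^{1/2\abs{A} - o(1)}$, namely \eqref{(1.1)}; the delicate point for $r > 2$ is to arrange the extraction so that $n$ stays $\le x$ without paying an extra factor $1/r$ in the exponent. When $\abs{A} = 1$ the simultaneous-smoothness requirement collapses to a single shift, and the much stronger smooth-shifted-prime count available there feeds the same optimization to give the larger exponent $0.7039$, which is \eqref{(1.2)}.

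I expect the principal obstacle to be analytic: securing an unconditional lower bound of the required quality for $N$, the count of primes all of whose shifts $p+a$ are \emph{simultaneously} $y$-smooth, since for $\abs{A} \ge 2$ one cannot simply multiply single-shift densities. The secondary, genuinely combinatorial, difficulty is specific to $r > 2$: lacking the $\mathbb{F}_2$ positivity, one must either accept the guaranteed same-core count or instead control the distribution of $r$-cores finely enough to exploit complementary pairing, all while holding the number of prime factors—and hence the size of $n$—in check so that the final exponent reaches $1/2\abs{A}$ uniformly in $r$.
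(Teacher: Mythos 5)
Your opening reduction---exponent vectors in $(\mathbb{Z}/r\mathbb{Z})^{\abs{A}\pi(y)}$ and zero-sum subsets of simultaneously smooth shifted primes---is exactly the paper's setup, and you correctly identify the analytic input needed (lower bounds for $\pi_F(z,y)$ with $F = \prod_{a\in A}(X+a)$; the paper quotes Baker--Harman, Theorem \ref{Theorem 2.1}, for one shift, and Dartyge--Martin--Tenenbaum, Theorem \ref{Theorem 2.2}, for several). But the combinatorial engine you propose for general $r$---products of exactly $r$ primes sharing a common $r$-core---has a fatal gap. The convexity bound $\sum_{\boldsymbol{\kappa}}\binom{N_{\boldsymbol{\kappa}}}{r} \ge K\binom{N/K}{r}$ is vacuous unless $N/K > r-1$: if all cores are distinct the left side is $0$, so ``use the trivial $K \le N$'' proves nothing, and nothing in any known theorem forces even one collision. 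To force collisions you would need the number of attainable core vectors, which is at most $\min\br{r^{\abs{A}\pi(y)}, \Psi(z,y)^{\abs{A}}}$ (cores are $r$th-power-free and $y$-smooth), to be smaller than $N$; since $N \le z$, this requires $\pi(y) \ll \log z$, i.e.\ $y \ll \log z\log\log z$. At that level of smoothness even the total count of $y$-smooth integers satisfies $\Psi(z,y) = z^{o(1)}$, so $N = z^{o(1)}$, and since your $n$ have exactly $r$ prime factors, each at most $z \le x^{1/r}$, your construction produces at most $N^r = x^{o(1)}$ integers. In the opposite regime, where the known smoothness theorems actually apply ($y \ge z^{0.2961}$, resp.\ $y \ge z^{1-1/2\abs{A}+\epsilon}$) and $N \gg z/(\log z)^c$, the core space is exponentially larger than $N$ and no collision is guaranteed, so the count could be zero. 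Core control and abundance of primes are mutually exclusive: multiplying a \emph{bounded} number of primes can never reach a fixed positive power of $x$ this way.

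The paper resolves precisely this tension by changing both parameters you try to optimize. It takes the primes tiny, $p \le y^u$ with $y = \log x/\log\log x$ and $u = (0.2961)^{-1}$ for $\abs{A}=1$, $u = (1+\epsilon-1/2\abs{A})^{-1}$ for $\abs{A}\ge 2$, and multiplies together not $r$ of them but up to $k = [\log x/\log y^u]$ of them, so the product stays $\le x$. The zero-sum condition is then handled not by collisions but by the Davenport-type constant: by Theorem \ref{Theorem 2.3} (van Emde Boas--Kruyswijk), $n(G) \ll r\abs{A}\pi(y)\log r$ is \emph{linear} in $\pi(y)$---in contrast to your core count $r^{\abs{A}\pi(y)}$, which is exponential in $\pi(y)$---so $n(G) = o(k)$, and Proposition \ref{Proposition 2.4} (Alford--Granville--Pomerance) yields at least $\binom{\pi_F}{k}\big/\binom{\pi_F}{n(G)} \ge (\pi_F/k)^k\,\pi_F^{-n(G)}$ zero-sum subsets of size at most $k$. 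The zero-sum requirement thus costs only the factor $\pi_F^{-n(G)} = x^{-o(1)}$, and the count is $x^{1-1/u-o(1)}$. This also corrects your attribution of the exponents: the $1/2\abs{A}$ in \eqref{(1.1)} has nothing to do with $\mathbb{F}_2$-positivity or a factor $2^{-d}$; it equals $1 - 1/u$ with $1/u = 1+\epsilon-1/2\abs{A}$, i.e.\ it comes from the Dartyge--Martin--Tenenbaum smoothness threshold, exactly as $0.7039 = 1 - 0.2961$ in \eqref{(1.2)} comes from Baker--Harman. Note finally that even your $r=2$ kernel argument is incomplete as stated, since the kernel of the linear map does not respect the size constraint $\prod_{p\in S}p \le x$; the length restriction built into Proposition \ref{Proposition 2.4} is what handles this.
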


In the case $A = \{-1\}$ (respectively $A = \{+1\}$), $\mathcal{B}(x;A,r)$ is the set of squarefree integers $n$ up to $x$ for which $\phi(n)$ (respectively $\sigma(n)$) is an $r$th power. There is no condition on the number of prime factors of $n$, but the next theorem concerns
 \begin{align*}
\mathcal{B}^{*}(x;-1,r)  & = \{n \le x : \textrm{$n$ is squarefree, $\omega(n) = r$ and $\phi(n)$ is an $r$th power}\}, \\
\mathcal{B}^{*}(x;+1,r)  & = \{n \le x : \textrm{$n$ is squarefree, $\omega(n) = r$ and $\sigma(n)$ is an $r$th power}\},
\end{align*}
where $\omega(n)$ is the number of distinct prime factors of $n$.
\begin{Thm}\label{Theorem 1.2}
Fix an integer $r \ge 2$. For all sufficiently large $x$, we have
\begin{align}\label{(1.3)}
\abs{\mathcal{B}^{*}(x;-1,r)}, \,\abs{\mathcal{B}^{*}(x;+1,r)} \gg \frac{rx^{1/r}}{(\log x)^{r+2}}.
\end{align}
The implied constant is absolute.
\end{Thm}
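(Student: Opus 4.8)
The plan is to reduce both statements to a single combinatorial task: produce $\gg rx^{1/r}/(\log x)^{r+2}$ ordered tuples of distinct primes $p_1,\dots,p_r$ with $\prod_{i}(p_i+a)$ a perfect $r$th power (with $a=-1$ for $\phi$ and $a=+1$ for $\sigma$) and $n=p_1\cdots p_r\le x$; the resulting $n$ are automatically squarefree with $\omega(n)=r$. To generate such tuples I would use a \emph{completion} device: choose $r-1$ of the primes with a lot of freedom, and let the last prime repair the power.

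Concretely, for $A=\{-1\}$ I would let $p_1,\dots,p_{r-1}$ range over distinct primes with $\prod_{i<r}p_i\le x^{1/r}$, set $M=\prod_{i=1}^{r-1}(p_i-1)$, and let $W$ be the $r$-free part of $M$ (so $0\le v_\ell(W)<r$ for all $\ell$) with complementary factor $W'$, the least positive integer making $WW'$ a perfect $r$th power. Then I put $p_r:=W'+1$, so that $\prod_{i\le r}(p_i-1)=W'M$ is an $r$th power by design; for generic (squarefree) $M$ one has $W'\asymp M^{r-1}$, while for $r=2$ this is simply $p_r=\mathrm{sqfree}(M)+1$ and one restricts to $p_1-1$ non-squarefree so that $p_r<p_1$. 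Provided $p_r$ is prime and distinct from $p_1,\dots,p_{r-1}$, the integer $n=p_1\cdots p_r$ lies in $\mathcal{B}^{*}(x;-1,r)$ and $n\le (\prod_{i<r}p_i)^{r}\le x$. The case $A=\{+1\}$ is identical with every $p_i-1$ replaced by $p_i+1$.

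Next I would count. Taking the $p_i$ balanced near $x^{1/(r(r-1))}$ gives $\gg x^{1/r}/(\log x)^{r-1}$ admissible $(r-1)$-tuples, and since the completing prime $p_r$ should be prime with ``probability'' about $1/\log x$, the heuristic yield of valid $n$ is $\gg x^{1/r}/(\log x)^{r}$, comfortably inside the target $rx^{1/r}/(\log x)^{r+2}$. To make this rigorous I would group the tuples by the completion value $w=W'$ and write
\[
\abs{\mathcal{B}^{*}(x;-1,r)}\ge\sum_{w}\mathbf{1}[\,w+1\ \text{prime}\,]\,T(w),
\]
where $T(w)$ counts the $(r-1)$-tuples whose shifts have complementary factor exactly $w$; here Siegel--Walfisz and Bombieri--Vinogradov should evaluate $T(w)$ (a count of primes in progressions with a prescribed $r$-free structure on $p_i-1$) and detect the primality of $w+1$ on average.

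The step I expect to be the main obstacle is precisely this last one: the completing prime $p_r=W'+1$ is a thin, arithmetically erratic function of $p_1,\dots,p_{r-1}$ (essentially a value of a degree-$(r-1)$ polynomial in $M$, or $\mathrm{sqfree}(M)+1$ when $r=2$), so showing it is prime for a positive $1/(\log x)^{O(1)}$ proportion of tuples collides with the parity barrier and with the unsolved problem of primes in polynomial/quadratic sequences. I expect the resolution to be to choose the free primes so that $w+1$ is forced to range over a progression-friendly family—pinning $\prod_{i<r}(p_i-1)$ to a prescribed, accessible complementary factor—thereby trading the uncontrollable primality of a polynomial value for primality of a \emph{linear} form to which Bombieri--Vinogradov applies; the extra power loss incurred in this averaging is what I expect to account for the exponent $(\log x)^{r+2}$ rather than the heuristic $(\log x)^{r}$.
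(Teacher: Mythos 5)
Your construction fails at precisely the step you flag, and the repair you sketch does not actually repair it. The completion prime $p_r = W'+1$, with $W' \asymp M^{r-1}$ a multiplicatively defined function of the free primes, is a value of a sparse, erratic sequence; proving that it is prime for even a $1/(\log x)^{O(1)}$ proportion of tuples is of the same depth as the unsolved problem of primes represented by polynomials of degree $\ge 2$ (for $r=2$ it is the assertion that the squarefree kernel of $p-1$, plus one, is prime for many $p$, which is equally out of reach). Rewriting the count as $\sum_{w}\mathbf{1}[\,w+1\ \text{prime}\,]\,T(w)$ does not create any averaging to exploit: the condition ``complementary factor exactly $w$'' is so rigid that for each $w$ the quantity $T(w)$ is essentially bounded, and no level-of-distribution theorem (Bombieri--Vinogradov included) detects primality of $w+1$ for $w$ running over such a thin, structureless set --- those theorems control primes in \emph{all} residue classes to a modulus on average over moduli, which is a different question entirely. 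So the proposal as written has a genuine, fatal gap, and your final paragraph is a statement of hope rather than an argument.

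The paper's proof shows what the correct realization of your ``pin the complementary factor'' instinct looks like: the pinning must be built into \emph{all} $r$ primes from the outset rather than imposed on the last one after the fact. One restricts attention to primes of the special form $\ell_i = a_i^r n + 1$ with a \emph{shared} parameter $n \le N$ and small coefficients $a_i \le H = r\log N$; then $\phi(\ell_1\cdots\ell_r) = (a_1\cdots a_r n)^r$ is an $r$th power identically, and no completion step is ever needed. Primality is now a question about the linear forms $a^r n + 1$ in $n$ to moduli $a^r \ll (\log N)^r$, where Siegel--Walfisz alone suffices. A GPY-style positivity argument finishes: since $\sum_{a \le H} a/\phi(a) \sim cH$ with $c = \prod_p (1 + 1/(p(p-1))) > 1$, one gets $\sum_{n\le N}\bigl(\sum_{a\le H}\vartheta(a^rn+1) - (r-1)\log(H^rN+1)\bigr) \gg rN\log N$, whence $\gg N/\log N$ values of $n$ admit at least $r$ primes of this shape; each such $n$ produces an element of $\mathcal{B}^{*}((H^rN+1)^r;-1,r)$ with multiplicity at most $H$, giving the stated bound. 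The structural lesson is that the theorem is proved not by completing generic tuples but by confining all the primes to a one-parameter family inside which the $r$th-power condition is automatic.
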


The proof of Theorem \ref{Theorem 1.1} (Section \ref{Section 3}) is an extension of the proof by Banks et.\ al.\ \cite{BFPS2004} of the aforementioned special cases of Theorem \ref{Theorem 1.1}. It employs some of the ideas of Erd\H{o}s \cite{E1935, E1956} upon which Alford, Granville and Pomerance \cite{AGP1994} based their proof that there are infinitely many Carmichael numbers. The proof of Theorem \ref{Theorem 1.2} (Section \ref{Section 4}) introduces a new method, which, as we will explain, is an application of the ideas of Goldston, Pintz and Y{\i}ld{\i}r{\i}m \cite{GPY2009}. 

\section{Preliminaries}\label{Section 2}

Theorem \ref{Theorem 1.1} is a consequence of the first four results of this section, and we use the fifth in the proof of Theorem \ref{Theorem 1.2}.

An integer $n$ is called $y$-smooth if $p \le y$ for every prime $p$ dividing $n$. Given a polynomial $F(X) \in \mathbb{Z}[X]$ and numbers $x \ge y \ge 2$, let
\[
 \pi_F(x,y) = \abs{\{p \le x : \textrm{$F(p)$ is $y$-smooth}\}}.
\]
In the case $F = X - 1$, Erd\H{o}s \cite{E1935} proved that there exists a number $\epsilon \in (0,1)$ such that $\pi_F(x,x^{\epsilon}) \gg_{\epsilon} \pi(x)$ (where $\pi(x)$ is the number of primes up to $x$), for all large $x$ depending on the choice of $\epsilon$. Several authors have improved upon this, the next two results being the best so far obtained.
\begin{Thm}\label{Theorem 2.1}
 Fix a nonzero integer $a$ and let $F(X) = X+a$. For some absolute constant $c$, we have
\[
 \pi_F(x,y) > \frac{x}{(\log x)^c}
\]
for all sufficiently large $x$, provided $y \ge x^{0.2961}$. 
\end{Thm}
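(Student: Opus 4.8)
The plan is to read $\pi_F(x,y)$, with $y = x^{\alpha}$ and $\alpha \ge 0.2961$, as a count of $y$-smooth integers $n \le x$ for which $n - a$ is prime, and to produce a lower bound of the anticipated size $\asymp \rho(1/\alpha)\pi(x)$ (here $\rho$ is Dickman's function), which already exceeds $x/(\log x)^c$ for any fixed $c \ge 1$. I would obtain this by a sieve that detects primes in the set $\mathcal{S}$ of $y$-smooth integers in a dyadic block $(x/2,x]$: concretely, apply Harman's alternative sieve (equivalently, iterate Buchstab's identity) to the von Mangoldt weight $\sum_{n \in \mathcal{S}} \Lambda(n-a)$. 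This writes the sum as a Dickman-type main term plus a controlled number of Type I (linear) and Type II (bilinear) remainders, and the whole game is to show that these remainders are $o(\rho(1/\alpha)\pi(x))$.

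The arithmetic input is equidistribution in arithmetic progressions, entering in two flavours. The Type I terms reduce to bounding $\sum_{d \le D}\abs{\#\{n \in \mathcal{S} : n \equiv a \pmod d\} - \tfrac1d\#\mathcal{S}}$, i.e. to the distribution of smooth numbers in residue classes, which is known to a level $D$ that is a substantial power of $x$. The Type II terms are genuine bilinear forms in which the prime $n-a$ is split; here I would exploit the fact that $y$-smooth moduli are \emph{well-factorable}, so that they decompose into two factors of essentially prescribed sizes. This well-factorability is exactly the feature that lets one raise the level of distribution past the Bombieri--Vinogradov threshold $x^{1/2}$, by the dispersion method and large-sieve inequalities in the style of Bombieri--Friedlander--Iwaniec and of Fouvry.

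I would then feed these estimates back into the sieve, arranging the Buchstab decomposition so that every bilinear remainder lands in a range where one of the two equidistribution inputs applies, and read off the admissible exponent $\alpha$ from the requirement that the accumulated combinatorial losses stay strictly below the main term. The hard part will be precisely this Type II analysis together with the optimization: one must keep every bilinear piece out of the ``forbidden'' window where neither the smooth-number equidistribution nor the level-raising is strong enough, while ensuring the total Buchstab loss is smaller than $\rho(1/\alpha)\pi(x)$. It is this delicate balancing that forces $\alpha$ down only as far as $0.2961$ rather than to the conjecturally admissible $\alpha = o(1)$; in its strongest known form the statement (for $F = X+a$) is the theorem of Baker and Harman on shifted primes without large prime factors, whose analysis is the one sketched here.
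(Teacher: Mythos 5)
The paper does not actually prove this statement: Theorem \ref{Theorem 2.1} is quoted directly from Baker and Harman \cite{BH1998}, and the ``proof'' in the paper is the citation. The argument in that cited work is, in outline, the programme you describe --- Harman's alternative sieve with Buchstab iterations, Type I input on the distribution in arithmetic progressions, and Type II bilinear estimates that go past the Bombieri--Vinogradov level $x^{1/2}$ by exploiting well-factorable (smooth) moduli in the manner of Bombieri--Friedlander--Iwaniec and Fouvry. So you have identified both the correct source and the correct machinery; to that extent your proposal matches the (cited) proof.

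There is, however, one substantive overclaim that would derail your plan if you executed it as written. You set the target at a lower bound $\asymp \rho(1/\alpha)\pi(x)$ and say the whole game is to make every sieve remainder $o\bigl(\rho(1/\alpha)\pi(x)\bigr)$. At $\alpha = 0.2961$ that game is not winnable with the inputs you list, and it is not what Baker and Harman do: positive-proportion bounds $\pi_F(x,x^{\alpha}) \gg \pi(x)$ are known only for larger exponents, namely $\alpha > 1/(2\sqrt{e}) \approx 0.3033$ (Friedlander), and the passage down to $0.2961$ is bought precisely at the cost of discarding positive sieve terms and tolerating losses that are powers of $\log x$, not $o(1)$ relative to the conjectured Dickman main term. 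This is exactly why the theorem is stated with the bound $x/(\log x)^c$ for an unspecified absolute constant $c$, in contrast with Theorem \ref{Theorem 2.2}, where the paper notes one may take $c=1$. Your outline does prove the stated theorem, but only after you weaken the objective accordingly: arrange the decomposition so that the retained positive terms are $\gg x/(\log x)^c$ and the remainders are smaller by a fixed power of $\log x$. Insisting on remainders that are $o\bigl(\rho(1/\alpha)\pi(x)\bigr)$ would make the optimization at $0.2961$ infeasible --- indeed, achieving it would constitute a new theorem stronger than anything currently known at that exponent.
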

\begin{proof}
 See \cite[Theorem 1]{BH1998}.
\end{proof}

\begin{Thm}\label{Theorem 2.2}
 Let $F$ be a polynomial with integer coefficients. Let $g$ be the largest of the degrees of $F$ and let $k$ be the number of distinct irreducible factors of $F$ of degree $g$. Suppose that $F(0) \ne 0$ if $g = k = 1$, and let $\epsilon$ be any positive real number. Then the estimate
\[
 \pi_F(x,y) \asymp \frac{x}{\log x}
\]
holds for all sufficiently large $x$, provided $y \ge x^{g+\epsilon - 1/2k}$.
\end{Thm}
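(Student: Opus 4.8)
The plan is to bound $\pi_F(x,y)$ from below by counting its complement. Since $\pi_F(x,y) \le \pi(x) \ll x/\log x$ trivially, it suffices to show that the number of \emph{bad} primes $p \le x$ — those for which $F(p)$ has a prime factor exceeding $y$ — is at most a fixed proportion $(1-\delta)\pi(x)$, $\delta > 0$; combined with the trivial upper bound this yields $\pi_F(x,y) \ge \delta\pi(x) \gg x/\log x$ and hence $\pi_F(x,y) \asymp x/\log x$. Factor $F = c\prod_i F_i$ with the $F_i$ irreducible. A factor $F_i$ of degree $< g$ satisfies $|F_i(p)| \ll x^{g-1} < y$ whenever $y \ge x^{g-1/2k+\epsilon}$ (since $1/2k < 1$), so such factors are automatically $y$-smooth and create no bad primes. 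Thus only the $k$ irreducible factors of degree exactly $g$ matter, and by a union bound it is enough to bound, for each such $F_i$, the number of $p \le x$ for which $F_i(p)$ has a prime factor $q > y$.

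The key is to exploit the smallness of the complementary divisor. If $q \mid F_i(p)$ with $q > y$, then since $|F_i(p)| \ll x^g$ the cofactor $m := F_i(p)/q$ satisfies $m \ll x^g/y = x^{1/2k-\epsilon} =: M$. Hence every bad $p$ for $F_i$ arises from some $m \le M$ with $m \mid F_i(p)$ and $F_i(p)/m$ prime, so that
\[
 \#\{p \le x : F_i(p) \text{ has a prime factor} > y\} \le \sum_{m \le M} \#\{p \le x : p \text{ prime},\ m \mid F_i(p),\ F_i(p)/m \text{ prime}\}.
\]
For fixed $m$ the inner quantity is governed by a two-dimensional upper bound sieve: $p$ is confined to the $\rho_i(m)$ residue classes modulo $m$ in which $F_i \equiv 0$, and on each class we require the two forms $p$ and $F_i(p)/m$ to be simultaneously prime. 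Brun's or Selberg's sieve gives $\ll x\,\mathfrak{S}_i(m)/(\log x)^2$, the extra factor $1/\log x$ beyond the naive $\rho_i(m)\,x/\phi(m)$ coming precisely from the primality of the cofactor $q = F_i(p)/m$. Summing over $m \le M$, the arithmetic mean $\sum_{m \le M}\rho_i(m)\mathfrak{S}_i(m)/\phi(m) \sim c_i\log M = c_i(1/2k - \epsilon)\log x$, so each $F_i$ contributes at most a $(1/2k - \epsilon + o(1))$-fraction of $\pi(x)$. The threshold $g - \tfrac{1}{2k}$ is calibrated exactly so that, after summing over the $k$ degree-$g$ factors, the total proportion of bad primes is bounded away from $1$, leaving a positive proportion of $p$ with $F(p)$ being $y$-smooth.

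The main obstacle is the sieve step: one must secure the two-fold saving $1/(\log x)^2$ uniformly over all cofactors $m \le M = x^{1/2k-\epsilon}$, a range reaching essentially $\sqrt{x}$ when $g = k = 1$, and then evaluate $\sum_{m \le M}\rho_i(m)\mathfrak{S}_i(m)/\phi(m) \sim c_i\log M$ with enough control on $c_i$ to guarantee that the combined fraction of bad primes stays strictly below $1$. This last inequality is what the factor $\tfrac12$ in the exponent $g - \tfrac{1}{2k}$ buys, and it is where the excluded degenerate case $g=k=1$, $F(0)=0$ (namely $F = cX$, for which $F(p) = cp$ can never be $y$-smooth with $y < x$) would break the scheme, forcing the hypothesis $F(0)\ne 0$ there. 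The remaining bookkeeping — discarding the $O(1)$ primes dividing the leading coefficient or a discriminant, the finitely many $p$ with $F_i(p) = 0$, and the negligible multiple-counting from $F(p)$ having several large prime factors — is routine.
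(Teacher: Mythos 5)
Your proposal cannot be checked against an argument in the paper, because the paper does not prove this theorem: its ``proof'' is a citation to Dartyge--Martin--Tenenbaum \cite{DMT2001}. Your skeleton (complement counting, discarding the factors of degree below $g$, the small-cofactor trick $m = F_i(p)/q \le x^{1/2k-\epsilon}$, and the diagnosis of the excluded case $F = cX$) is sensible, but there is a fatal quantitative gap at precisely the step you flag as ``the main obstacle.'' The claim that each degree-$g$ factor contributes at most a $(1/2k-\epsilon+o(1))$-fraction of $\pi(x)$ bad primes fails for two compounding reasons. (i) An upper-bound sieve for ``$p$ and $F_i(p)/m$ simultaneously prime'' does not give the heuristic main term with constant $1$: Selberg's or Brun's sieve loses a factor of at least $4$ (refinements give roughly $3.4$), and the parity obstruction prevents any sieve of this type from reaching a constant below $2$; your computation tacitly sets this constant equal to $1$. (ii) Independently, the logarithms are misbooked: when $m = x^{\beta}$ the cofactor $q = F_i(p)/m$ has size about $x^{g-\beta}$, so the sieve bound is $\ll \mathfrak{S}_i(m)\,x/\bigl((g-\beta)(\log x)^2\bigr)$, and summing over $m \le x^{1/2k-\epsilon}$ yields, even with sieve constant $1$,
\[
\pi(x)\int_0^{1/2k-\epsilon}\frac{d\beta}{g-\beta} \;=\; \pi(x)\log\frac{g}{g-1/2k+\epsilon},
\]
not $(1/2k-\epsilon)\pi(x)$. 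For $g=1$ this exceeds $(1/2k)\pi(x)$; indeed for $g=k=1$ the correct (conjecturally sharp) bad proportion is $\log 2 \approx 0.69$, already above your claimed $1/2-\epsilon$. Combining (i) and (ii), your method can only ever show that the number of bad primes is at most about $C\,k\log\bigl(g/(g-1/2k+\epsilon)\bigr)\pi(x)$ with $C \ge 2$. When $g=1$ --- exactly the case this paper needs, since it applies the theorem to $F = \prod_{a\in A}(X+a)$ with $g=1$, $k=\abs{A}$ --- this is at least $2k\log\bigl(1/(1-1/2k)\bigr)\pi(x) > \pi(x)$, because $\log(1/(1-t)) > t$. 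So the bad set can never be shown to be a proper fraction of $\pi(x)$ by this route, no matter how much the sieve constant is sharpened: the approach is structurally blocked, not merely incomplete.

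The reason the theorem is nonetheless true with exponent $g - 1/2k$ is that the $1/2$ is not a calibration against sieve losses; it is the level of distribution in the Bombieri--Vinogradov theorem, and a correct proof must evaluate the critical averages asymptotically --- with constant exactly $1$ --- instead of bounding a twin-prime-type count from above. To illustrate, here is how the case $k=1$ goes. One has $\sum_{p \le x}\log\abs{F(p)} = g\,\pi(x)\log x\,(1+o(1))$, while Bombieri--Vinogradov together with the prime ideal theorem (applied to the average of the root-counting function of $F$) shows that prime divisors $q \le x^{1/2-\delta}$ of the values $F(p)$ account for at least $(\tfrac12-\delta)\pi(x)\log x\,(1+o(1))$ of this sum. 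Hence primes $q > y = x^{g-1/2+\epsilon}$ account for at most $(g-\tfrac12+\delta+o(1))\pi(x)\log x$, and since each bad $p$ contributes at least $\log y$ to that portion, the number of bad primes is at most $\bigl((g-\tfrac12+\delta)/(g-\tfrac12+\epsilon)+o(1)\bigr)\pi(x) \le (1-c)\pi(x)$ once $\delta < \epsilon$. No constant is lost anywhere. The case $k \ge 2$ requires genuinely more work (that is the real content of \cite{DMT2001}), but in no case can the argument route through an upper-bound sieve for two simultaneous primes, for the reason given above.
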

\begin{proof}
 See \cite[Theorem 1.2]{DMT2001}.
\end{proof}

For a finite additive abelian group $G$, denote by $n(G)$ the length of the longest sequence of (not necessarily distinct) elements of $G$, no nonempty subsequence of which sums to $0$, the additive identity of $G$. For instance, if $G = (\mathbb{Z}/2\mathbb{Z})^m$, then $n(G) \le m$, for any sequence of $m+1$ elements of $G$ contains a nonempty subsequence whose elements sum to $(0,\ldots,0) \bmod 2$, as can be seen by considering that such a sequence contains $2^{m+1} - 1 > 2^m = \abs{G}$ nonempty subsequences. For any group $G$ of order $m$, then any sequence of $m$ elements contains a nonempty subsequence whose sum is $0$, hence $n(G) \le m-1$. The next theorem, due to van Emde Boas and Kruyswijk \cite{EK1969}, gives a nontrivial upper bound for $n(G)$.
 
\begin{Thm}\label{Theorem 2.3}
 If $G$ is a finite abelian group and $m$ is the maximal order of an element in $G$, then $n(G) < m(1 + \log (\abs{G}/m))$.
\end{Thm}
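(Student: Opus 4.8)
The plan is to pass to the \emph{Davenport constant} $D(G)$, the least $\ell$ such that every sequence of $\ell$ elements of $G$ has a nonempty zero-sum subsequence, so that $n(G) = D(G) - 1$; the assertion becomes that every zero-sum-free sequence $g_1, \ldots, g_\ell$ in $G$ has length $\ell < m\br{1 + \log(\abs{G}/m)}$. I may assume that $g_1, \ldots, g_\ell$ generate $G$: otherwise I replace $G$ by the subgroup $G'$ they generate, whose order $\abs{G'}$ is smaller and whose exponent $m'$ is at most $m$, and since $t\br{1 + \log(N/t)}$ is nondecreasing in $N$ and in $t$ on the range $1 \le t \le N$, proving the bound for $G'$ only sharpens the target. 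The strategy is to detect zero-sum subsequences by Fourier analysis on $G$, exploiting that $mg_i = 0$ forces every character value $\chi(g_i)$ to be an $m$th root of unity.

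First I would record the generating identity. For \emph{any} sequence, orthogonality of the characters $\chi \in \hat{G}$ gives
\[
 \#\{S \subseteq \{1, \ldots, \ell\} : \textstyle\sum_{i \in S} g_i = 0\} = \frac{1}{\abs{G}} \sum_{\chi \in \hat{G}} \prod_{i=1}^{\ell} \br{1 + \chi(g_i)},
\]
since expanding gives $\prod_i\br{1 + \chi(g_i)} = \sum_S \chi\br{\sum_{i \in S} g_i}$ and $\abs{G}^{-1}\sum_\chi \chi(h)$ is $1$ or $0$ according as $h = 0$ or not. When the sequence is zero-sum-free the left side equals $1$ (only $S = \emptyset$ contributes), and isolating the principal character $\chi_0$ yields
\[
 2^{\ell} = \abs{G} - \sum_{\chi \ne \chi_0} \prod_{i=1}^{\ell} \br{1 + \chi(g_i)}.
\]
Thus the whole problem is to show that the dominant term $2^{\ell} = \prod_i\br{1 + \chi_0(g_i)}$ cannot be large, i.e.\ that the nonprincipal characters very nearly cancel it against $\abs{G}$.

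The input that makes this possible is that each $\chi(g_i)$ is an $m$th root of unity, so $\abs{1 + \chi(g_i)} \le 2\cos(\pi/m) < 2$ whenever $\chi(g_i) \ne 1$, that is, whenever $g_i \notin \ker \chi$. Writing $e(\chi) = \#\{i : g_i \notin \ker\chi\}$, this gives the per-character decay
\[
 \abs{\prod_{i=1}^{\ell} \br{1 + \chi(g_i)}} \le 2^{\ell} \br{\cos(\pi/m)}^{e(\chi)}.
\]
The heart of the argument is to \emph{average} this estimate over $\chi \ne \chi_0$ and feed it back into the identity: one must show that characters with small $e(\chi)$, hence little decay, are too scarce to sustain $2^\ell$, the relevant count being governed by the subgroup and coset structure of $G$ — and it is here that the ratio $\abs{G}/m$ enters. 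Carrying this out converts the additive identity into a bound of the shape $2^\ell \le \abs{G}\cdot(\textrm{correction})$ and, after optimisation, into $\ell < m\br{1 + \log(\abs{G}/m)}$, with the factor $m$ coming from the spacing $\cos(\pi/m) \approx 1 - \pi^2/2m^2$ of the available roots of unity and the logarithm from the geometric summation over characters.

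The main obstacle is exactly this global averaging. The term-by-term decay is worthless when the sequence concentrates in a single kernel $\ker\chi$, and generation alone guarantees only \emph{one} index outside each kernel, so no crude union bound suffices. Overcoming this is the substance of the van Emde Boas--Kruyswijk counting, in which characters are weighted according to the subgroups on which they are trivial and a geometric-type series is summed. As a consistency check I would treat the cyclic case $G = \mathbb{Z}/m\mathbb{Z}$ (where $\abs{G}/m = 1$ and the claim reduces to $D(\mathbb{Z}/m\mathbb{Z}) = m$) directly: the partial sums $0, g_1, g_1 + g_2, \ldots, g_1 + \cdots + g_\ell$ are $\ell + 1$ elements of a group of order $m$, so if $\ell \ge m$ two of them coincide and their difference is a nonempty zero-sum subsequence, recovering the base of the estimate.
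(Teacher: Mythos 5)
Your setup is fine as far as it goes: the reformulation via the Davenport constant, the reduction to generating sequences (the monotonicity of $t(1+\log(N/t))$ does check out), the orthogonality identity $2^{\ell} = \abs{G} - \sum_{\chi \ne \chi_0} \prod_i (1+\chi(g_i))$, and the observation that $\abs{1+\chi(g_i)} \le 2\cos(\pi/m)$ whenever $\chi(g_i) \ne 1$ are all correct. But the theorem is not proved: the entire content lies in the step you yourself label ``the heart of the argument'' and then defer, writing that overcoming the averaging obstacle ``is the substance of the van Emde Boas--Kruyswijk counting.'' That is an acknowledgment that the key step is missing, not an argument. (Note that the paper gives no proof either; it cites \cite{EK1969} and \cite[Theorem 1.1]{AGP1994}, so the task was precisely to supply what those references contain.)

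Moreover, the route you sketch cannot be completed as described. Test it on $G = \mathbb{Z}/m\mathbb{Z}$ with the extremal zero-sum-free sequence $g_1 = \cdots = g_{m-1} = 1$, so $\ell = m-1$, which is exactly where the claimed bound $m(1+\log(\abs{G}/m)) = m$ is tight. Every nonprincipal character satisfies $\chi(1) \ne 1$, so $e(\chi) = \ell$ and your per-character estimate is as strong as it ever gets: $\abs{\prod_i(1+\chi(g_i))} \le 2^{\ell}\cos(\pi/m)^{\ell} \approx 2^{\ell}\exp(-\pi^2/(2m))$. Summing these moduli over the $m-1$ nonprincipal characters gives roughly $(m-1)\exp(-\pi^2/(2m))\,2^{\ell}$, which is far \emph{larger} than $2^{\ell}$, so no appeal to the triangle inequality can show the nonprincipal terms are too small to cancel $2^{\ell}$ against $\abs{G}$. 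Indeed, here the identity reads $2^{m-1} = m - \sum_{c \ne 0}(1+\zeta^c)^{m-1}$ with $\zeta = e^{2\pi i/m}$: each nonprincipal term has modulus nearly $2^{m-1}$, and the balance is achieved purely by cancellation among their phases, which any magnitude-only argument discards. Such an argument can yield at best $D(\mathbb{Z}/m\mathbb{Z}) = O(m^2 \log m)$, nowhere near $m$; so the plan fails precisely where the theorem is sharpest. Tellingly, your own consistency check for the cyclic case abandons the Fourier identity in favor of the partial-sum pigeonhole. The classical proof (van Emde Boas--Kruyswijk; see \cite[Theorem 1.1]{AGP1994}) avoids this trap: instead of the subset products $\prod_i(1+\chi(g_i))$ it works with the full geometric sums $\sum_{j=0}^{m-1}\chi(g_i)^j$, which vanish \emph{identically} unless $\chi(g_i) = 1$, so that orthogonality produces an exact count of representations of $0$ with coefficients in $\{0,\ldots,m-1\}$ --- no oscillating terms to estimate --- and the zero-sum-free hypothesis enters through a separate combinatorial bound on that count. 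If you want to rescue your approach, that exact-orthogonality identity is the place to start.
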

\begin{proof}
 See \cite{EK1969}. A proof is also given in \cite[Theorem 1.1]{AGP1994}.
\end{proof}

The following proposition shows that there may be many sequences in $G$ whose elements sum to $0$. 
\begin{Prop}\label{Proposition 2.4}
 Let $G$ be a finite abelian group and let $r > k > n = n(G)$ be integers. Then any subsequence of $r$ elements of $G$ contains at least $\binom{r}{k} \big\slash \binom{r}{n}$ distinct subsequences of length at most $k$ and at least $k - n$, whose sum is the identity. 
\end{Prop}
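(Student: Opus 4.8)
The plan is to run a double-counting (averaging) argument over the $\binom{r}{k}$ subsequences of length exactly $k$, which I would index by the $k$-element subsets of the position set $\{1,\dots,r\}$ so that repeated group elements are handled correctly (two subsequences counting as distinct precisely when their index sets differ). The heart of the matter is a single extraction lemma: \emph{every} length-$k$ subsequence $W$, with $k > n$, contains a zero-sum subsequence $Z \subseteq W$ with $k - n \le \abs{Z} \le k$. Granting this, one obtains the stated count by controlling how many $W$ can give rise to the same $Z$.

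First I would prove the extraction lemma. Take $Z \subseteq W$ to be a zero-sum subsequence of \emph{maximal} length; it is nonempty because $\abs{W} = k > n = n(G)$ guarantees at least one nonempty zero-sum subsequence. If the complement $W \setminus Z$ had more than $n$ elements, then by the very definition of $n(G)$ it would itself contain a nonempty zero-sum subsequence $Z'$, whereupon $Z \cup Z'$ would be a strictly longer zero-sum subsequence of $W$, contradicting maximality. Hence $\abs{W \setminus Z} \le n$, that is $\abs{Z} \ge k - n$, while $\abs{Z} \le \abs{W} = k$ is automatic; so the maximal $Z$ lands in the required window.

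Next I would assign to each of the $\binom{r}{k}$ subsets $W$ one such subsequence $Z(W)$ and count the fibres of the map $W \mapsto Z(W)$. A fixed zero-sum subsequence $Z$ of length $j$ can equal $Z(W)$ only if $Z \subseteq W$, and the number of length-$k$ subsets containing a fixed $j$-set is $\binom{r-j}{k-j}$; since $j \ge k - n$, this is at most $\binom{r}{n}$, the only point needing care being the monotonicity estimate $\binom{r-j}{k-j} \le \binom{r-k+n}{n} \le \binom{r}{n}$, valid uniformly across $k - n \le j \le k$. Distributing the $\binom{r}{k}$ subsets $W$ among fibres of size at most $\binom{r}{n}$ then yields at least $\binom{r}{k}\big/\binom{r}{n}$ distinct zero-sum subsequences $Z$, each of length between $k-n$ and $k$, which is the assertion. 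I expect this fibre-size bound to be the only genuinely fiddly step; the extraction lemma, though the conceptual core, is short once the maximality trick is in place.
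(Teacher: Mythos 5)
Your proposal is correct and is essentially the argument the paper invokes by reference: the paper proves Proposition \ref{Proposition 2.4} simply by citing \cite[Proposition 1.2]{AGP1994}, and your extraction lemma (maximal zero-sum subsequence has complement of size at most $n(G)$) together with the fibre bound $\binom{r-j}{k-j} \le \binom{r-k+n}{n} \le \binom{r}{n}$ is exactly the Alford--Granville--Pomerance proof, including the correct treatment of repeated elements via index sets.
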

\begin{proof}
 See \cite[Proposition 1.2]{AGP1994}.
\end{proof}

We will use the well-known Siegel-Walfisz theorem in the proof of Theorem \ref{Theorem 1.2}.
\begin{Thm}[Siegel-Walfisz]\label{Theorem 2.5}
For any positive number $B$, there is a constant $C_B$ depending only on $B$, such that 
 \[
  \sum_{\substack{p \le N \\ p \equiv a \bmod q}} \log p = \frac{N}{\phi(q)} + O\br{N\exp\br{-C_B(\log N)^{1/2}}}
 \]
whenever $(a,q) = 1$ and $q \le (\log N)^B$.
\end{Thm}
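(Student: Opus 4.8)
The plan is to detect the congruence condition with Dirichlet characters and reduce the problem to bounding twisted prime sums. By orthogonality,
\[
\sum_{\substack{p \le N \\ p \equiv a \bmod q}} \log p = \frac{1}{\phi(q)} \sum_{\chi \bmod q} \bar\chi(a) \sum_{p \le N} \chi(p)\log p,
\]
and it is convenient to pass from the sum over primes to the Chebyshev-type sum $\psi(N,\chi) = \sum_{n \le N}\chi(n)\Lambda(n)$, the difference coming from prime powers being $O(\sqrt N \log N)$, well within the claimed error. The principal character $\chi_0$ isolates the main term: $\tfrac{1}{\phi(q)}\psi(N,\chi_0) = \tfrac{1}{\phi(q)}\br{\psi(N) + O(\log q \log N)}$, and the Prime Number Theorem with its classical error term gives $\psi(N) = N + O\br{N\exp(-c\sqrt{\log N})}$. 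Thus everything reduces to showing that each non-principal contribution $\psi(N,\chi)$ is $O\br{N\exp(-C_B(\log N)^{1/2})}$, uniformly for $q \le (\log N)^B$.

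For the non-principal characters I would invoke the explicit formula obtained from Perron's formula and a contour shift, expressing $\psi(N,\chi)$ in terms of the nontrivial zeros $\rho = \beta + i\gamma$ of $L(s,\chi)$ as $-\sum_\rho N^\rho/\rho$ plus lower-order terms. The key analytic input is the classical zero-free region: there is an absolute constant $c_1 > 0$ such that $L(s,\chi) \ne 0$ for $\sigma \ge 1 - c_1/\log(q(\abs{t}+2))$, with the sole possible exception of one simple real zero $\beta_1$ of a real character. Because $q \le (\log N)^B$ is only polylogarithmic in $N$, the contribution of all zeros obeying this bound is comfortably $O\br{N\exp(-c_2\sqrt{\log N})}$, after summing $N^\beta/\abs{\rho}$ over zeros and weighting by the standard density estimate for $L(s,\chi)$-zeros in horizontal strips.

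The main obstacle, as always in this circle of ideas, is the possible exceptional (Siegel) zero $\beta_1$, whose contribution $-N^{\beta_1}/\beta_1$ is not controlled by the classical zero-free region alone. To handle it I would appeal to Siegel's theorem: for every $\epsilon > 0$ there is a constant $c(\epsilon) > 0$ with $\beta_1 < 1 - c(\epsilon)q^{-\epsilon}$ for every real non-principal $\chi \bmod q$. Then
\[
N^{\beta_1} = N\,N^{\beta_1 - 1} \le N\exp\br{-c(\epsilon)q^{-\epsilon}\log N},
\]
and since $q \le (\log N)^B$ gives $q^{-\epsilon}\log N \ge (\log N)^{1 - B\epsilon}$, the choice $\epsilon = 1/(2B)$ forces the exponent down to $(\log N)^{1/2}$, yielding $N^{\beta_1} \le N\exp\br{-c(\epsilon)(\log N)^{1/2}}$. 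This matches the target error, and it also explains the shape of the statement: the constant $C_B$ depends on $B$ precisely through this choice of $\epsilon$, and it is ineffective because Siegel's $c(\epsilon)$ is ineffective. Collecting the principal term, the ordinary zeros, and the exceptional zero then gives the theorem.
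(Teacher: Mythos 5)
Your proposal is correct: the paper gives no proof of its own here, citing Davenport's \emph{Multiplicative Number Theory}, Chapter 22, and the argument you sketch --- orthogonality of characters, the explicit formula with the classical zero-free region, and Siegel's theorem with $\epsilon = 1/(2B)$ to neutralize a possible exceptional zero when $q \le (\log N)^B$ --- is exactly the standard proof carried out there. Your closing remark that $C_B$ is ineffective because Siegel's constant $c(\epsilon)$ is ineffective is also the right observation about the nature of this theorem.
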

\begin{proof}
 See \cite[Chapter 22]{D2000}.
\end{proof}

\section{Proof of Theorem 1.1}\label{Section 3}

The following proof hinges on Theorem \ref{Theorem 2.3} and Proposition \ref{Proposition 2.4}, which are key ingredients in the celebrated proof of Alford, Granville and Pomerance \cite{AGP1994} that there are infinitely many Carmichael numbers. (A Carmichael number is a composite number $n$ for which $a^n \equiv a \bmod n$ for all integers $a$.) In fact it is shown in \cite[Theorem 1]{AGP1994} that the number of Carmichael numbers $C(x)$ up to $x$ satisfies $C(x) \ge x^{\beta - \epsilon}$ for any $\epsilon > 0$ and all large $x$ depending on the choice of $\epsilon$, where
\[
 \beta = \frac{5}{12}\br{1 - \frac{1}{2\sqrt{e}}} = 0.29036\ldots .
\]

Using a variant of the construction in \cite{AGP1994}, Harman \cite{H2005} proved that $\beta = 0.3322408$ is admissible, and combining the ideas of \cite{AGP1994, BFPS2004, H2005}, Banks \cite{B2009} established the following result.
\begin{Thm}[{\cite[Theorem 1]{B2009}}]\label{Theorem 3.1}
For every fixed $C < 1$, there is a number $x_0(C)$ such that for all $x \ge x_0(C)$ the inequality
\[
 \abs{\{n \le x : \textrm{$n$ is Carmichael and $\phi(n)$ is an $r$th power}\}} \ge x^{\beta - \epsilon}
\]
holds, with $\beta = 0.3322408$ and any $\epsilon > 0$, for all positive integers $r \le \exp\br{(\log\log x)^C}$.
\end{Thm}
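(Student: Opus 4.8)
The plan is to fuse the Alford--Granville--Pomerance construction of Carmichael numbers with the exponent-vector bookkeeping of \cite{BFPS2004}, so that a single zero-sum subsequence simultaneously produces a Carmichael number $n$ and forces $\phi(n)$ to be a perfect $r$th power. Recall Korselt's criterion: $n$ is Carmichael if and only if $n$ is squarefree and $p - 1 \mid n - 1$ for every prime $p \mid n$. Following \cite{AGP1994}, I would fix a highly composite squarefree integer $L$, all of whose prime factors are at most $y$, and work with the set $\mathcal{P}$ of primes $p$ for which $p - 1 \mid L$. For any $S \subseteq \mathcal{P}$ with $\prod_{p \in S} p \equiv 1 \pmod{L}$, the number $n = \prod_{p \in S} p$ is squarefree and satisfies $p - 1 \mid L \mid n - 1$ for each $p \mid n$, hence is Carmichael. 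Crucially, since $p - 1 \mid L$ and $L$ is $y$-smooth, each $p - 1$ is automatically $y$-smooth, so its prime factorization is supported on the primes $q \mid L$; this is exactly what lets us overlay the $r$th-power condition.

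The key step is to enlarge the ambient group. Let $G = \br{\mathbb{Z}/L\mathbb{Z}}^{*} \times \br{\mathbb{Z}/r\mathbb{Z}}^{\omega(L)}$ and map each $p \in \mathcal{P}$ to $\psi(p) = \br{p \bmod L,\, \br{v_q(p-1) \bmod r}_{q \mid L}}$, where $v_q$ denotes the $q$-adic valuation. A subsequence $S$ summing to the identity of $G$ is one for which, componentwise, $\prod_{p \in S} p \equiv 1 \pmod{L}$ and $\sum_{p \in S} v_q(p - 1) \equiv 0 \pmod{r}$ for every $q \mid L$. The first condition makes $n = \prod_{p \in S} p$ Carmichael, as above; the second makes every exponent in the factorization of $\phi(n) = \prod_{p \in S}(p - 1)$ divisible by $r$, so that $\phi(n)$ is a perfect $r$th power. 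Distinct subsequences of distinct primes give distinct $n$, so it suffices to count zero-sum subsequences $S$ of bounded length with $\prod_{p \in S} p \le x$.

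To count them I would apply Theorem \ref{Theorem 2.3} and Proposition \ref{Proposition 2.4} to $G$. With $m = \operatorname{lcm}\br{\lambda(L), r}$ the maximal order and $\abs{G} = \phi(L) r^{\omega(L)}$, Theorem \ref{Theorem 2.3} bounds $n = n(G) < m\br{1 + \log\br{\abs{G}/m}}$, and Proposition \ref{Proposition 2.4} then furnishes at least $\binom{N}{k}\big\slash\binom{N}{n}$ zero-sum subsequences of length at most $k$, where $N = \abs{\mathcal{P}}$. The decisive point is that adjoining the factor $\br{\mathbb{Z}/r\mathbb{Z}}^{\omega(L)}$ inflates the Carmichael-only bound for $n(G)$ only by a factor $r^{O(1)}\exp\br{O\br{\omega(L)\log r}}$. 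Because $C < 1$, the hypothesis $r \le \exp\br{(\log\log x)^C}$ forces $r \le (\log x)^{o(1)}$ and $\log r = o(\log\log x)$; together with $\omega(L) \le \pi(y)$ this makes the inflation $x^{o(1)}$, so it is absorbed into $\epsilon$ and leaves the exponent $\beta$ untouched. This is precisely why the conclusion holds uniformly for all $r$ in the stated range.

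The main obstacle is the part inherited from \cite{AGP1994} and sharpened in \cite{H2005}: producing an $L$ that is simultaneously $y$-smooth, has $\lambda(L)$ small and $N = \abs{\mathcal{P}}$ large, and is small enough (so that products of the relevant number of primes stay below $x$) to optimize the resulting exponent up to $\beta = 0.3322408$. This rests on the strongest available lower bounds for the count of primes $p$ with $p - 1$ smooth (Theorem \ref{Theorem 2.1} and its refinements) fed into Harman's sieve, and is entirely the analytic heart of the argument; by contrast, the verification that the $r$th-power twist is negligible is routine once $C < 1$ is assumed. I would therefore carry out the AGP/Harman optimization verbatim, tracking only the extra $\br{\mathbb{Z}/r\mathbb{Z}}^{\omega(L)}$ factor through Theorem \ref{Theorem 2.3}, and confirm at the end that the size constraint $\prod_{p \in S} p \le x$ together with the counting via Proposition \ref{Proposition 2.4} reproduces the exponent $\beta - \epsilon$.
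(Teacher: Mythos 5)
The paper itself offers no proof of this statement: Theorem \ref{Theorem 3.1} is quoted directly from Banks \cite[Theorem 1]{B2009}, with only the remark that it is obtained by ``combining the ideas of \cite{AGP1994, BFPS2004, H2005}''. Your proposal reconstructs precisely that combination --- overlaying the exponent-vector bookkeeping of \cite{BFPS2004} on the Alford--Granville--Pomerance zero-sum construction via a product group, with Theorem \ref{Theorem 2.3} and Proposition \ref{Proposition 2.4} doing the counting --- so the architecture is the right one, and you correctly identify that the hypothesis $r \le \exp\br{(\log\log x)^C}$ with $C < 1$ is what keeps the $r$th-power twist harmless.

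There are, however, two concrete defects. First, your construction silently sets the AGP auxiliary parameter $k$ equal to $1$: you work with $\mathcal{P} = \{p : p-1 \mid L\}$, but no available result produces an $L$ for which this set is provably large --- that would require many divisors $d \mid L$ with $d+1$ prime, which is exactly what cannot be proved directly. The analytic heart of \cite{AGP1994, H2005} is to consider primes $p = dk+1$ with $d \mid L$ and to \emph{average over the auxiliary modulus} $k$, using smooth-shifted-prime counts of the type of Theorem \ref{Theorem 2.1}, to find one $k$ that works for many $d$. Restoring $k$ changes both factors of your group: the Korselt condition $p - 1 = dk \mid n-1$ now requires $n \equiv 1 \pmod{kL}$, so the first factor must be $\br{\mathbb{Z}/kL\mathbb{Z}}^{*}$; and since $\phi(n) = k^{\abs{S}}\prod_{p \in S} d_p$, the exponent vectors must track all primes dividing $kL$, so the second factor must be $\br{\mathbb{Z}/r\mathbb{Z}}^{\omega(kL)}$. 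Second, your accounting of the inflation of $n(G)$ is off in a way that matters. Theorem \ref{Theorem 2.3} gives $n(G) \le \mathrm{lcm}\br{\lambda(kL),r}\br{1 + \log\br{\abs{G}/\mathrm{lcm}\br{\lambda(kL),r}}}$, i.e.\ an inflation factor that is polynomial in $r$ and $\omega(kL)\log r$, not your $r^{O(1)}\exp\br{O\br{\omega(L)\log r}}$; and the criterion for harmlessness is not that the inflation be $x^{o(1)}$. The comparison Proposition \ref{Proposition 2.4} needs is $n(G) < k$ against subsequence lengths $k = O(\log x)$, so an inflation of size $\exp\br{O\br{\omega(L)\log r}}$ --- though certainly $x^{o(1)}$ --- could swamp that inequality entirely. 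With the correct polynomial inflation, the loss in the exponent is $O\br{(\log\log x)^C/\log\log x} = o(1)$ precisely because $C < 1$, and the argument closes; this is the content of Banks' proof, but as written your bound and your absorption criterion do not suffice to conclude.
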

(Harman \cite{H2008} has subsequently proved that $\beta = 0.7039\times 0.4736 > 1/3$ is admissible here.) The method of the proof may yield further interesting results. 

Theorems \ref{Theorem 2.1} and \ref{Theorem 2.2} are also crucial, and it will be manifest that extending the admissible range for $y$ in those theorems will lead to better estimates for $\abs{\mathcal{B}(x;A,r)}$. Explicitly, if $F(X) = {\textstyle\prod_{a \in A}}(X+a)$ and
\[
 \pi_F(x,x^{\epsilon}) \asymp_{F,\epsilon} \frac{x}{\log x}
\]
holds, then the following proof yields $\abs{\mathcal{B}(x;A,r)} \ge x^{1 - \epsilon - o(1)}$. It is suspected that any positive $\epsilon$ is admissible, in which case we would have $\abs{\mathcal{B}(x;A,r)} = x^{1 - o(1)}$.

\begin{proof}[Proof of Theorem \ref{Theorem 1.1}]
Fix an integer $r \ge 2$ and a set $A = \{a_1,\ldots,a_s\}$ of nonzero integers. Let $x$ be a large number, and let
\begin{align}\label{(3.1)}
y = \frac{\log x}{\log\log x}.
\end{align}
Let $t = \pi(y)$, and let $G = (\mathbb{Z}/r\mathbb{Z})^{st}$, so that by Theorem \ref{Theorem 2.5},
\begin{align}\label{(3.2)}
n(G) < r(1 + \log\abs{G}/r) = r(1 + (st-1)\log r).
\end{align} 
Fix any number $\epsilon \in (0,1/3s)$, and let
\begin{align*}
u =
\begin{cases}
(0.2961)^{-1} & \textrm{if $s = 1$,} \\
\br{1 + \epsilon - \frac{1}{2s}}^{-1} & \textrm{if $s \ge 2$.}
\end{cases}
\end{align*}
Let
\[
F(X) = (X+a_1)(X+a_2)\cdots (X+a_s),
\]
and let
\[
S_F(y^u,y) = \{p \le y^u : \textrm{$F(p)$ is $y$-smooth} \} 
 = \{p \le y^u : \textrm{$p + a_1,\ldots,p+a_s$ are $y$-smooth} \}.
\]
We may suppose $x$, and hence $y$, is large enough so that, by Theorem \ref{Theorem 2.1} and Theorem \ref{Theorem 2.2}, 
\begin{align}\label{(3.3)}
\abs{S_F(y^u,y)} = \pi_F(y^u,y) \gg \frac{y^u}{(\log y^u)^c}
\end{align}
for some constant $c$. (We may suppose $c = 1$ in the case $s \ge 2$.) Finally, let
\begin{align}\label{(3.4)}
k = \left[\frac{\log x}{\log y^u}\right],
\end{align}
where $[\alpha]$ denotes the integer part of a real number $\alpha$.

By \eqref{(3.1)}, \eqref{(3.3)} and \eqref{(3.4)},
\[
\frac{\pi_F(y^u,y)}{k} \gg \frac{(\log x)^{u-1}}{(\log\log x)^{u-1+c}},
\]
and by \eqref{(3.1)}, \eqref{(3.2)} and \eqref{(3.4)}, 
\begin{align}\label{(3.5)}
\frac{k}{n(G)} \gg_{r,s} \frac{\log x/\log y^u}{t} \gg  \log\log x,
\end{align}
because $t = \pi(y) \sim y/\log y$ as $y \to \infty$, by the prime number theorem. Therefore, since $u > 1$, we may assume $x$ is large enough so that
\begin{align}\label{(3.6)}
n(G) < k < \pi_F(y^u,y).
\end{align}

For primes $p \in S_F(y^u,y)$ and integers $a \in A$, we may write
\[
p+a = 2^{\beta^{(a)}_1}3^{\beta^{(a)}_2}\cdots p_t^{\beta^{(a)}_t},
\]
where $\beta^{(a)}_i$, $1 \le i \le t$, are nonnegative integers. We define
\[
\mathbf{v}_p =(\beta^{(a_1)}_1,\ldots,\beta^{(a_1)}_t,\beta^{(a_2)}_1,\ldots,\beta^{(a_2)}_t,\ldots,\beta^{(a_s)}_1,\ldots,\beta^{(a_s)}_t)
\]
as the ``exponent vector'' for $p$. For a subset $R$ of $S_F(y^u,y)$, $\textstyle{\prod_{p \in R}} (p + a)$
is an $r$th power for every $a \in A$ if and only if 
\[
\sum_{p \in R} \mathbf{v}_p \equiv \mathbf{0} \bmod r,
\]
where $\mathbf{0} \bmod r$ is the zero element of $G$. If, moreover, $R$ is of size at most $k$, then by \eqref{(3.4)},
\[
\prod_{p \in R} p \le y^{uk} \le x.
\]
Thus
\begin{align}\label{(3.7)}
 \abs{\mathcal{B}(x;A,r)} \ge \abs{\left\{R \subseteq S_F(y^u,y) : \textrm{$\abs{R} \le k$ and $\textstyle{\sum_{p \in R}} \mathbf{v}_p \equiv \mathbf{0} \bmod r$}\right\}},
\end{align}
as distinct subsets $R \subseteq S_F(y^u,y)$ give rise to distinct integers $n$, by uniqueness of factorization.

Because of \eqref{(3.6)}, we may deduce from Proposition \ref{Proposition 2.4} that the right-hand side of \eqref{(3.7)} is at least
\[
\binom{\pi_F(y^u,y)}{k} \Big\slash \binom{\pi_F(y^u,y)}{n(G)} \ge \br{\frac{\pi_F(y^u,y)}{k}}^{k}\pi_F(y^u,y)^{-n(G)} := x^{f(x)},
\]
where
\[
f(x) = (k - n(G))\frac{\log \pi_F(y^u,y)}{\log x} - \frac{k\log k}{\log x}.
\]
Letting $x$ tend to infinity and using \eqref{(3.1)}, \eqref{(3.3)}, \eqref{(3.4)}, and \eqref{(3.5)}, we see that $f(x) = 1 - 1/u - o(1)$. Therefore, as $x \to \infty$, we have
\[
 \abs{\mathcal{B}(x;A,r)} \ge x^{1 - 1/u - o(1)},
\]
and Theorem \ref{Theorem 1.1} follows by our choice for $u$, and letting $\epsilon$ tend to $0$ in the case $s \ge 2$. 
\end{proof}

\section{Proof of Theorem 1.2}\label{Section 4}

We use a different approach to prove Theorem \ref{Theorem 1.2}. The proof is ``inspired'' by the breakthrough results of Goldston, Pintz and Y{\i}ld{\i}r{\i}m \cite{GPY2009} on short intervals containing primes. Basically, their proof begins with the observation that if $W(n)$ is a nonnegative weight and
\begin{align}\label{(4.1)}
\sum_{N < n \le 2N}\br{\sum_{h \le H} \vartheta(n+h) - \log (2N+H)}W(n)
\end{align}
is positive, then for some $n \in (N,2N]$, the interval $(n,n+H]$ contains at least $2$ primes. Here and in the sequel, 
\begin{align*}
\vartheta(n) 
=
\begin{cases}
\log n & \textrm{if $n$ is prime,} \\
0 & \textrm{otherwise.}
\end{cases}
\end{align*}
Goldston, Pintz and Y{\i}ld{\i}r{\i}m were able to obtain a nonnegative weight $W(n)$ for which \eqref{(4.1)}, with $H = \epsilon\log N$, is positive for all sufficiently large $N$. In our problem, we will be led to consider
\[
 \sum_{n \le N} \br{\sum_{a \le H} \vartheta(a^rn+1) - (r-1)\log (H^rN+1)}
\]
(see \eqref{(4.3)}). A lower bound for this expression corresponds to a lower bound for the number of $n \le N$ for which $\{a^rn+1 : a \le H\}$ contains at least $r$ primes. As we do not require $H$ to be ``short'' compared to $N$, we may take $H = r\log N$: then the weight $W(n) = 1$ works, and the problem is much easier.

\begin{proof}[Proof of Theorem \ref{Theorem 1.2}]
Throughout the proof, $r \ge 2$ is a fixed integer, and $n,a,a_1,a_2,\ldots$ are positive integers. Observe that if, for some $n$,
\[
\ell_i = a_i^rn+1, \quad i = 1,\ldots,r
\]
are distinct primes, then
\[
\phi(\ell_1\cdots \ell_r) = (a_1\cdots a_rn)^r.
\]
If the primes $\ell_i$ are of the form $a_i^rn-1$ then $\sigma(\ell_1\cdots \ell_r) = (a_1\cdots a_rn)^r$. We will prove that \eqref{(1.3)} holds for $\abs{\mathcal{B}(x;-1,r)}$, provided $x$ is sufficiently large, and the same proof applies to $\abs{\mathcal{B}(x;+1,r)}$ if we consider primes of the form $a_i^rn-1$ rather than $a_i^rn+1$. 

Let $N$ be a parameter tending monotonically to infinity and set $H = r\log N$. Let $\mathcal{A}(N)$ be the set of $n \le N$ for which 
\[
\mathcal{C}_n = \{a^rn+1 : a \le H\} \cap \mathcal{P}
\]
(where $\mathcal{P}$ is the set of all primes) contains at least $r$ primes. We will show that
\begin{align}\label{(4.2)}
\abs{\mathcal{A}(N)} \gg \frac{N}{\log N},
\end{align}
but first we will describe how this implies a lower bound for $\abs{\mathcal{B}(x;-1,r)}$.

Every $n \in \mathcal{A}(N)$ gives rise, via $\mathcal{C}_n$, to some $\ell_1\cdots \ell_r \in \mathcal{B}((H^rN+1)^r;-1,r)$, though different $n$ may give rise to the same $r$-tuple of primes. On the other hand, given $n \in \mathcal{A}(N)$ and a prime $p = a^rn+1 \in \mathcal{C}_n$, each $m \in \mathcal{A}(N)$ for which $\mathcal{C}_m = \mathcal{C}_n$ corresponds to a solution to $a^rn = b^rm$, $b \le H$. Therefore there can be at most $H$ different $n \in \mathcal{A}(N)$ giving rise to the same element of $\mathcal{B}((H^rN+1)^r;-1,r)$. Consequently,
\[
\abs{\mathcal{B}((H^rN+1)^r;-1,r)} \ge \frac{\abs{\mathcal{A}(N)}}{H} \gg \frac{N}{r(\log N)^2}
\]
by \eqref{(4.2)}, and \eqref{(1.3)} follows.

We will now establish \eqref{(4.2)}. We will show that for all large $N$,
\begin{align}\label{(4.3)}
S(N) = \sum_{n \le N} \br{\sum_{a \le H} \vartheta(a^rn+1) - (r-1)\log (H^rN+1)} \gg rN\log N.
\end{align}
Consequently $\mathcal{A}(N)$ is nonempty for large $N$. Indeed, if \eqref{(4.3)} holds then
\begin{align*}
rN\log N  \ll S(N) & \le 
\sum_{n \in \mathcal{A}(N)} \br{\sum_{a \le H} \vartheta(a^rn+1) - (r-1)\log (H^rN+1)} \\ & \le
\abs{\mathcal{A}(N)} H \log (H^rN+1),
\end{align*}
and \eqref{(4.2)} follows because $\log (H^rN+1) \sim \log N$.

For the evaluation of $S(N)$, first note that
\[
\sum_{n \le N}  \sum_{a \le H} \vartheta(a^rn+1) = \sum_{a \le H} \sums{p \le a^rN+1}{p \equiv 1 \bmod a^r}  \log p.
\]
Since $a^r \ll_r (\log N)^r$ for $a \le H$, we may apply Theorem \ref{Theorem 2.5} to the last sum. We have
\[
\sums{p \le a^rN+1}{p \equiv 1 \bmod a^r}  \log p = \frac{a^rN}{\phi(a^r)} + O\br{\frac{a^rN}{\phi(a^r)(\log N)^2}} \sim \frac{a}{\phi(a)}N.
\]
Therefore, by the well-known estimate
\[
 \sum_{a \le H} \frac{a}{\phi(a)} \sim cH, \quad c = \prod_p \br{1 + \frac{1}{p(p-1)}} = 1.943596\ldots,
\]
we have
\[
\sum_{n \le N}  \sum_{a \le H} \vartheta(a^rn+1) \sim 
N\sum_{a \le H} \frac{a}{\phi(a)} \sim cNH.
\]
Also,
\[
\sum_{n \le N} (r-1) \log (H^rN+1) \sim N(r-1)\log N,
\]
so combining all of this yields
\[
S(N) \sim N(cH - (r-1)\log N) \gg rN\log N,
\]
hence \eqref{(4.3)}.
\end{proof}

\section{Acknowledgements}\label{Section 5}
I would like to thank Andrew Granville for introducing me to the problems considered in this note, and for his help in the preparation of it. I would also like to thank Christian Elsholtz for some helpful comments.

\bibliographystyle{article}

\end{document}